\newtheorem{theorem}{Theorem}[section]
\newtheorem{lemma}[theorem]{Lemma}
\newtheorem{cor}[theorem]{Corollary}
\newtheorem{conj}[theorem]{Conjecture}
\newtheorem{prop}[theorem]{Proposition}
\newtheorem{thm}[theorem]{Theorem}
\newtheorem{main}[theorem]{Main Result}
\theoremstyle{definition}
\newtheorem{defi}[theorem]{Definition}
\theoremstyle{remark}
\newtheorem{rmk}[theorem]{Remark}
\numberwithin{equation}{section}
\DeclareMathOperator{\coker}{coker}
\newcommand{\isom}{\cong}
\newcommand{\Z}{{\mathbb{Z}}}
\newcommand{\Q}{{\mathbb{Q}}}
\newcommand{\R}{{\mathbb{R}}}
\newcommand{\C}{{\mathbb{C}}}
\newcommand{\Mid}{|} 
\newcommand{\miD}{|}
\newcommand{\ra}{{\rightarrow}}
  \newcommand{\textcyr}[1]{%
    {\fontencoding{OT2}\fontfamily{wncyr}\fontseries{m}\fontshape{n}%
     \selectfont #1}}
\newcommand{\Sha}{{\mbox{\textcyr{Sh}}}} 
\newcommand{\Emin}{{E_{\scriptscriptstyle{\rm min}}}}
\newcommand{\Epmin}{{E'_{\scriptscriptstyle{\rm min}}}}
\newcommand{\ED}{{E_{\scriptscriptstyle{-D}}}}
\begin{document}

\title{Periods of quadratic twists of elliptic curves}
\author{Vivek Pal\footnote{Florida State University, the author was funded by the FSU Office of National Fellowships} \\ \\ with an appendix by Amod Agashe\footnote{Amod Agashe was supported by the National Security Agency Grant number Hg8230-10-1-0208}}
\date{}
\maketitle

\begin{abstract} In this paper we prove a relation between the period of an elliptic curve and the period of its real and imaginary quadratic twists. This relation is often misstated in the literature.
\end{abstract}

\section{Introduction}
\par One of the central conjectures in Number Theory is the Birch and Swinnerton-Dyer Conjecture, which predicts how one can obtain arithmetic information from the $L$-function. A simpler question, is to ask:
 \begin{quote}
	(*)  If an elliptic curve satisfies the Birch and Swinnerton-Dyer Conjecture then will its (quadratic) twist also satisfy the Birch and Swinnerton-Dyer  Conjecture? 
\end{quote}
\par Part two of the Birch and Swinnerton-Dyer Conjecture involves many elliptic curve invariants, namely the order of the Tate-Shafarevich group, the period and the order of the torsion subgroup among other important invariants. In this paper we relate the period of an elliptic curve with the period of its quadratic twists. A relation between the orders of the torsion subgroups has already been proven in~\cite{MR1430007}. If a similar result can be drawn for all of the other elliptic curve invariants involved in Part II of the Birch and Swinnerton-Dyer Conjecture then one can prove idea (*). Furthermore, a relation between the arithmetic component group and the regulator, of an elliptic curve and its twist would provide a conjecture for the relation between the order of the Shafarevich-Tate group for an elliptic curve and its twist.

\par One advantage of idea~(*) comes from the fact that quadratic twists of elliptic curves have very different ranks from the original curve. Currently Part two of the Birch and Swinnerton-Dyer Conjecture is only known to be true for families of elliptic curves, usually of low rank; using twists one could possibly extend these results to many different ranks. 

\par In general if $F$ is an elliptic curve, then we denote the invariant differential on~$F$ by $\omega(F)$. We will call a global minimal Weierstrass equation of an elliptic curve simply a minimal equation or minimal model, and denote a minimal model of an elliptic curve $F$ by $F_{\text{min}}$. 

\par Let $E$ be an elliptic curve. We use the Birch and Swinnerton-Dyer definition of the period. Recall that this period, denoted by $\Omega(E)$, is defined as:
$$\Omega(E) := \int _{E_{\text{min}} (\mathbb{R})} {|\omega(E_{\text{min}})|}.$$ 

\par Also, recall that the imaginary period, defined up to a sign, is $$\Omega^-(E) := \int _{\gamma^-} {\omega(E_{\text{min}})},$$ where $\gamma^-$ is a generator of $H_1(E_{\text{min}},\mathbb{Z})^-$, which is the subgroup of elements in $H_1(E_{\text{min}},\mathbb{Z})$ which are negated by complex conjugation.

\par Furthermore, recall that the quadratic twist of an elliptic curve $E$ by a non-zero integer $d$, denoted by $E^d$, is defined as an elliptic curve which is isomorphic to $E$ over $\mathbb{Q}(\sqrt{d})$ but not over $\mathbb{Q}$. Hence we can assume that $d$ is square-free. We also know that $E^d$ is unique up to isomorphism.

\par The main result of this paper is then 

\begin{main} \label{thm:main} Let $E$ be an elliptic curve and 
let $E^d$ denote its quadratic twist by a square-free integer~$d$. Then the periods of $E$ and $E^d$ are related as follows: \\
If $d >0$, then
$$ \Omega(E^d)=  \frac{\tilde{u}}{\sqrt{d}} \Omega(E), $$
and if $d<0$, then up to a sign,
$$ \Omega(E^d)=  \frac{\tilde{u}}{\sqrt{d}} c_\infty(E^d) \Omega^-(E) , $$
where $c_\infty(E^d)$ is the number of connected components of $E^d(\mathbb{R})$ and $\tilde{u}$ is a rational number such that $2 \tilde{u} \in \Z$; it depends on~$E$ and~$d$, and is defined explicitly in Proposition~\ref{minimal} (the elliptic curve~$E$ in Proposition~\ref{minimal} should be taken as a minimal model of the $E$ in this theorem). 
\end{main}

The main result above is proved as Theorem~\ref{period} below.

\begin{rmk} $\tilde{u}$ is not always 1 and $2 \tilde{u}$ can be divisible by an odd prime number. In Section~4, we give an example where $\tilde{u}$ is $5$ and an example where $\tilde{u}$ is~$7$. Also, in the last paragraph of the appendix (Section~\ref{sec:app}), there is an example of an {\em optimal} elliptic curve for which $\tilde{u}$ has positive 3-adic valuation.
\end{rmk}

\par A result similar to the second case of the theorem above was derived in~\cite[Lemma~2.1]{vissq} for elliptic curves in short Weierstrass form using an assumption on which primes one can twist by. The result here is proved without restrictions.

\par The main result of this paper allows for a weaker hypothesis for several results in \cite{vissq}; the details are discussed in the appendix.

\par We would like to remark that the formulas in the Main Result have been stated incorrectly in the literature. For example, Amod Agashe informed the author that they are quoted without $\tilde{u}$ as formula~(12) on p.~463 in the proof of Corollary~3 in~\cite{ono-skinner-annals}; he also mentioned that the proof of Corollary~3 in~\cite{ono-skinner-annals} still works even after the formula is corrected to include $\tilde{u}$.

The author would like to thank Amod Agashe for suggesting this problem and for his help in revising several drafts of this paper. Furthermore the reference to Connell's book \cite{connell} was mentioned to the author by Amod Agashe, who in turn heard about it from Randy Heaton. 

\section{Quadratic Twists and Minimal Models}

\par First we recall some useful facts. An elliptic curve over $\mathbb{Q}$ can be described in the following general Weierstrass form:
$$ y^2 + a_1xy +a_3y = x^3 + a_2x^2 + a_4x + a_6,$$
with $a_1, a_2,a_3,a_4,a_6 \in \mathbb{Q}$.

\par  In this paper, by an elliptic curve, we mean a curve given by a Weierstrass equation. An elliptic curve will be called minimal if its Weierstrass equation is minimal. Let $E$ be an elliptic curve, and let $\Delta(E), j(E), c_4(E)$ and $c_6(E)$ be the usual Weierstrass invariants of the elliptic curve $E$. Then the signature of the elliptic curve $E$ is the triple $c_4(E), c_6(E), \Delta(E)$. If $p$ is a prime, then letting $v_p$ denote the standard $p$-adic valuation, the $p$-adic signature of $E$ is the triple $v_p(c_4(E)), v_p(c_6(E)), v_p(\Delta(E))$.

\begin{rmk} \label{prop} A transformation $E \rightarrow E'$ of elliptic curves over $\mathbb{Q}$ preserving the Weierstrass equation and the point at infinity is given by:\\
 \begin{center} $ x= u^2x' +r$  and  $y= u^3y' +u^2sx' +t ,$ \end{center}
for some $u,r,s,t \in \mathbb{Q}$. We will often abbreviate this transformation as the ordered tuple $[u,r,s,t]$. Such a  transformation has the following useful properties:
\begin{enumerate}
\item $u^4c_4(E') = c_4(E)$
\item $u^6 c_6(E')=c_6(E)$
\item $u^{12}  \Delta(E') = \Delta(E)$
\item $j(E')=j(E)$
\item $\omega(E') = u \omega(E)$
\end{enumerate}
\end{rmk}

\par The above facts can be found in any standard book on elliptic curves, for example see Silverman~\cite{silverman:aec}.

\par Since the period of an elliptic curve depends only on the isomorphism class, for the purpose of proving Main Result~\ref{thm:main} or for computing $\Omega(E)$, we can assume that $E$ is a minimal model, i.e. $E=E_{\text{min}}$. So henceforth, let $E$ be an elliptic curve given by the minimal equation: \begin{equation*} \tag{$E$}  y^2 + a_1xy +a_3y = x^3 + a_2x^2 + a_4x + a_6.  \end{equation*}

\begin{lemma}[Connell] \label{twistformula} Let $d$ be a square-free integer. Then a Weierstrass equation for $E^d$ is: 

\begin{align*} \tag{$E^d$} y^2 &+ a_1xy +a_3y= \\ &= x^3 + \left(a_2 d + a_1^2 \frac{d-1}{4}\right) x^2 + \left(a_4 d^2 + a_1 a_3 \frac{d^2-1}{2} \right)x + \left(a_6 d^3 + a_3^2 \frac{d^3-1}{4}\right). \end{align*}
\end{lemma}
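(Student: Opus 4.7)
The plan is to reduce $E$ to a Weierstrass equation with no $xy$ or $y$ terms by completing the square in $y$, apply the standard quadratic twist in that simpler form, and then revert to the general Weierstrass form keeping the same $a_1$ and $a_3$.

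First I would substitute $y \mapsto y - (a_1 x + a_3)/2$ in the equation for $E$. A routine expansion eliminates the $a_1 xy$ and $a_3 y$ terms and yields
\[ y^2 = x^3 + A_2 x^2 + A_4 x + A_6, \]
with
\[ A_2 = a_2 + \tfrac{a_1^2}{4}, \qquad A_4 = a_4 + \tfrac{a_1 a_3}{2}, \qquad A_6 = a_6 + \tfrac{a_3^2}{4}. \]
This change of coordinates is defined over $\mathbb{Q}$. Next, I would apply the standard quadratic twist to this short-ish form via $(x,y)\mapsto(x/d,\, y/(d\sqrt{d}))$, which is defined over $\mathbb{Q}(\sqrt{d})$ but not over $\mathbb{Q}$; it scales the coefficient of $x^{3-i}$ by $d^i$ and produces the curve
\[ y^2 = x^3 + (A_2 d)\,x^2 + (A_4 d^2)\,x + (A_6 d^3). \]
The composite change of coordinates from $E$ to this curve, being defined over $\mathbb{Q}(\sqrt{d})$ but not over $\mathbb{Q}$, exhibits it as a model of $E^d$.

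Finally, I would restore the general Weierstrass form with the same $a_1, a_3$ by substituting $y \mapsto y + (a_1 x + a_3)/2$, which reintroduces $a_1 xy + a_3 y$ on the left and subtracts $(a_1 x + a_3)^2/4$ from the cubic on the right. Collecting powers of $x$ and using
\[ A_2 d - \tfrac{a_1^2}{4} = a_2 d + a_1^2\tfrac{d-1}{4}, \quad A_4 d^2 - \tfrac{a_1 a_3}{2} = a_4 d^2 + a_1 a_3\tfrac{d^2-1}{2}, \quad A_6 d^3 - \tfrac{a_3^2}{4} = a_6 d^3 + a_3^2\tfrac{d^3-1}{4}, \]
I recover exactly the formula claimed.

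There is no serious obstacle; the proof is a bookkeeping exercise in three successive linear changes of variable, the middle one being the only one that requires $\sqrt{d}$. The only mild subtlety is that completing the square uses division by $2$, which is harmless in characteristic zero.
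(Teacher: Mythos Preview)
Your proof is correct: the three-step substitution (complete the square, scale by $\sqrt{d}$, undo the square) is exactly the standard derivation, and your composite change of variables agrees with the transformation $x=\alpha^2 x'$, $y=\alpha^3 y' + \tfrac{a_1\alpha^2(\alpha-1)}{2}x' + \tfrac{a_3(\alpha^3-1)}{2}$ recorded in the paper's Remark~\ref{twist}. The paper itself does not give a proof but simply cites Connell \cite[Proposition~4.3.2]{connell}, whose argument is essentially the one you wrote out; so your proposal is a self-contained version of the cited proof rather than a different route.
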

\begin{proof} See \cite[ Proposition 4.3.2]{connell} and the paragraph preceding it.
\end{proof}

\begin{rmk} \label{twist} The signature for elliptic curve $(E^d)$ is: $c_4(E^d) = c_4(E) \cdot d^2$, $c_6(E^d) = c_6(E) \cdot d^3$ and $\Delta(E^d) = \Delta(E) \cdot d^6$. Let $\alpha=\sqrt{1/d}$. Then the transformation from $E$ to $E^d$ is: \begin{equation*} \left\{ \begin{split} &x = \alpha^2 x' \\  &y = \alpha^3 y' + \frac{a_1 \alpha^2(\alpha-1)}{2} x' + \frac{a_3 (\alpha^3-1)}{2}. \end{split} \right. \end{equation*}

\end{rmk}

Next we recall a proposition from Connell which is displayed below for convenience. It describes $v_p(\Delta)$ for a minimal model of the twist for each prime $p$.

\begin{prop}[Connell] \label{connell} Recall that $E$ is a minimal elliptic curve over $\mathbb{Q}$ and $E^d$ is its quadratic twist by a square-free integer $d$. Let $\Delta$ be the discriminant of $E$, let $\Delta'$ be the discriminant of $E^d_{\text{min}}$, and for every valuation $v$ on $\mathbb{Z}$ let $\lambda_v =\min\{3v(c_4(E)),2v(c_6(E)), v(\Delta)\}$. If $p$ is a prime number, then let $v_p$ denote the standard $p$-adic valuation. Then
\begin{enumerate}

\item If $p$ is an odd prime divisor of $d$ then:
	\begin{enumerate}
		\item  If $\lambda_{v_p} <6$ or if $p=3$ and $v_p(c_6(E)) =5$, then $v_p(\Delta') = v_p(\Delta) + 6$.
		\item Otherwise $v_p(\Delta') = v_p(\Delta) - 6$.
	\end{enumerate}
If $p$ is an odd prime not dividing $d$, then $v_p(\Delta') = v_p(\Delta)$.
\item If $p=2$ then: 
\begin{enumerate} 
		\item If $d \equiv 1 \mod 4$, then $v_2(\Delta') =v_2(\Delta)$.
		\item If $d \equiv 3 \mod 4$, then 
	\begin{enumerate}	
			\item  If the 2-adic signature of $E$ is $0,0,c$ $(c\geq 0)$ or $a,3,0$ $(4 \leq a \leq \infty)$, then $v_2(\Delta') =v_2(\Delta)+12$.
			\item  If the 2-adic signature of $E$ is $4,6,c$ $(c\geq 12 \text{  and  } 2^{-6}c_6(E)d\equiv -1 \mod 4)$ or $a,9,12$ $(a \geq 8 \text{  and  } 2^{-9}c_6(E)d \equiv 1 \mod 4)$, then $v_2(\Delta') =v_2(\Delta)-12$.		
			\item Otherwise $v_2(\Delta') =v_2(\Delta)$.
	\end{enumerate}
		\item If $d \equiv 2 \mod 4$, let $w=d/2$ then 
	\begin{enumerate}		
			\item  If the 2-adic signature of $E$ is $0,0,c$ $(c\geq 0)$, then $v_2(\Delta') =v_2(\Delta)+18$.
			\item  If the 2-adic signature of $E$ is $6,9,c$ with $(c \geq 18)$ and $2^{-9} c_6(E) w \equiv -1 \mod 4$, then $v_2(\Delta') =v_2(\Delta)-18$.
			\item  If $v_2(c_4(E))=4,5$ or $v_2(c_6(E))=3,5,7$ or the 2-adic signature of $E$ is $a,6,6$ with $(a \geq 6)$ and $2^{-6} c_6(E) w \equiv -1 \mod 4$, then $v_2(\Delta') =v_2(\Delta)+6$.
			\item Otherwise $v_2(\Delta') =v_2(\Delta) -6$.
	\end{enumerate}
\end{enumerate}
\end{enumerate}
\end{prop}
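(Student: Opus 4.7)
The plan is to apply Tate's algorithm to the explicit (possibly non-minimal) Weierstrass equation for $E^d$ provided by Lemma~\ref{twistformula}. By Remark~\ref{twist}, this equation has invariants $c_4(E^d) = d^2 c_4(E)$, $c_6(E^d) = d^3 c_6(E)$, $\Delta(E^d) = d^6 \Delta(E)$, so for every prime $p$ the $p$-adic valuations of the invariants of $(E^d)$ are immediately expressible in terms of those of $E$ and of $v_p(d)$. A global minimal model for $E^d$ is obtained from $(E^d)$ by a finite chain of substitutions $[u,r,s,t]$, and each substitution with $v_p(u)=1$ drops $v_p(\Delta)$ by $12$, $v_p(c_4)$ by $4$, and $v_p(c_6)$ by $6$ (Remark~\ref{prop}). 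So for each prime $p$ it suffices to count how many such substitutions can be carried out while keeping the equation $p$-integral.

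For odd primes $p$ the count is governed almost entirely by $\lambda_{v_p}$: the criterion underlying Tate's algorithm (for $p\geq 5$) says that a model is non-minimal at $p$ precisely when $\lambda_{v_p}\geq 12$, and each reduction step lowers $\lambda_{v_p}$ by $12$. When $p\nmid d$ the valuations are unchanged and minimality of $E$ immediately yields $v_p(\Delta')=v_p(\Delta)$. When $p\mid d$ (so $v_p(d)=1$) one has $\lambda_{v_p}(E^d)=\lambda_{v_p}(E)+6$, and the two branches $\lambda_{v_p}(E)<6$ versus $\geq 6$ correspond to zero versus exactly one reduction, giving $v_p(\Delta')=v_p(\Delta)+6$ or $v_p(\Delta)-6$ respectively. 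At $p=3$ the minimality test used in Tate's algorithm is slightly finer: it recognizes the signature $v_3(c_6(E))=5$ as already minimal even when $\lambda_{v_3}\geq 12$, which is the source of the exceptional $p=3$ clause in the statement.

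The hardest case is $p=2$, where minimality is sensitive not just to the $2$-adic valuations of $c_4,c_6,\Delta$ but to their residues modulo $4$ (or higher powers of $2$), and where the transformation from $E$ to $(E^d)$ in Remark~\ref{twist} itself introduces denominators whose $2$-adic behaviour depends on $d\bmod 4$ and on whether $2\mid d$. I would split into the three branches $d\equiv 1\pmod 4$, $d\equiv 3\pmod 4$, and $d\equiv 2\pmod 4$ so that a uniform integral model of $(E^d)$ is available in each case, and then enumerate the admissible $2$-adic signatures of $E$, performing for each signature a substitution with $v_2(u)\in\{1,2,3\}$ as required and reading off the resulting $v_2(\Delta')$. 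The congruences $2^{-k}c_6(E)\,d\equiv\pm 1\pmod 4$ that appear in the statement are precisely the $2$-adic obstructions to performing one further Tate reduction step, arising from the condition for the half-integer terms in the substitution to remain $2$-integral. This bookkeeping is finite but intricate, and its complete verification is exactly what is carried out in Connell~\cite{connell}.
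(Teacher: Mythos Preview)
Your proposal takes essentially the same route as the paper: the paper's own proof consists entirely of a citation to Connell~\cite{connell} (noting that the statement \cite[5.7.3]{connell} is misstated there while the proof \cite[5.7.1]{connell} is correct), and your proposal likewise ultimately defers the full case analysis to Connell. The difference is only one of exposition: you supply a sketch of the mechanism (Tate's algorithm applied to the explicit twist model, the role of $\lambda_{v_p}$ at odd primes, and the finer $2$-adic Kraus-type conditions at $p=2$), whereas the paper gives none. Your outline is accurate in its broad strokes, though one small wording slip: in the $p=3$ clause you write ``already minimal even when $\lambda_{v_3}\geq 12$'', but since $E$ is minimal one always has $\lambda_{v_3}(E)<12$; the relevant obstruction is that the \emph{twist} $E^d$ with $v_3(c_6(E^d))=v_3(c_6(E))+3=8$ cannot be reduced further despite $\lambda_{v_3}(E^d)\geq 12$.
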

\begin{proof} This proposition is the corrected form of \cite[5.7.3]{connell}, which is misstated in Connell's book. The proof  given by Connell in \cite[5.7.1]{connell} is however correct. This was pointed out to the author by the referee.
\end{proof}

\begin{prop} \label{minimal} Recall that $E$ is a minimal elliptic curve over $\mathbb{Q}$ and $E^d$ is its quadratic twist by a square-free integer~$d$. Let $\Delta$ be the discriminant of $E$, let $\Delta'$ be the discriminant of $E^d_{\text{min}}$, and for every valuation~$v$ on~$\Z$, let $\lambda_v =\min\{3v(c_4(E)),2v(c_6(E)),v(\Delta)\}$.  If $p$ is a prime number, then let $v_p$ denote the standard $p$-adic valuation. Define $u_p$ for all primes $p$, as follows (the cases correspond exactly to the cases of Proposition~\ref{connell}):

\begin{enumerate}

\item If $p$ is an odd prime divisor of $d$, then:
	\begin{enumerate}
	\item If $\lambda_{v_p} <6$ or if $p=3$ and $v_p(c_6(E)) =5$, then $u_p = 1$.
	\item Otherwise $u_p = p$.
	\end{enumerate}
If $p$ is an odd prime not dividing $d$, then $u_p=1$.
\item If $p=2$ then: 
\begin{enumerate} 
		\item If $d \equiv 1 \mod 4$, then $u_2 = 1$.
		\item If $d \equiv 3 \mod 4$, then 
	\begin{enumerate}	
			\item If the 2-adic signature of $E$ is $0,0,c$ $(c\geq 0)$ or $a,3,0$ $(4 \leq a \leq \infty)$, then $u_2 = 1/2$.
			\item  If the 2-adic signature of $E$ is $4,6,c$ $(c\geq 12 \text{  and  } 2^{-6}c_6(E)d\equiv -1 \mod 4)$ or $a,9,12$ $(a \geq 8 \text{  and  } 2^{-9}c_6(E)d \equiv 1 \mod 4)$, then $u_2 = 2$.
			\item Otherwise $u_2 = 1$.
	\end{enumerate}
		\item If $d \equiv 2 \mod 4$, let $w=d/2$, then  
	\begin{enumerate}		
			\item If the 2-adic signature of $E$ is $0,0,c$ $(c\geq 0)$, then $u_2 = 1/2$.
			\item If the 2-adic signature of $E$ is $6,9,c$ with $(c \geq 18)$ and $2^{-9} c_6(E) w \equiv -1 \mod 4$, then $u_2 = 4$.
			\item If $v_2(c_4(E))=4,5$ or $v_2(c_6(E))=3,5,7$ or the 2-adic signature of $E$ is $a,6,6$ with $(a \geq 6)$ and $2^{-6} c_6(E) w \equiv -1 \mod 4$, then $u_2 = 1$.
			\item Otherwise $u_2 = 2$.
	\end{enumerate}
\end{enumerate}
\end{enumerate}
Let $\tilde{u}=\prod_p{ u_p}$. Then there exist $r,s,t \in \mathbb{Q}$ such that the transformation $[\tilde{u},r,s,t]$ will transform equation $E^d$ to a minimal model. 

\end{prop}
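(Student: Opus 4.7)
The plan is to compute the leading coefficient $u$ prime by prime using the discriminant relation, then absorb any residual sign. Since the Weierstrass equation $(E^d)$ of Lemma~\ref{twistformula} and the minimal model $E^d_{\min}$ are both Weierstrass equations over $\Q$ for the same elliptic curve, there exist $u \in \Q^\times$ and $r, s, t \in \Q$ such that the transformation $[u, r, s, t]$ carries $(E^d)$ to $E^d_{\min}$. It suffices to show $u = \pm\tilde u$.

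Combining Remark~\ref{prop}(3) with Remark~\ref{twist} gives
\[
u^{12}\,\Delta' \;=\; \Delta(E^d) \;=\; \Delta \cdot d^6,
\]
so for every prime $p$,
\[
12\, v_p(u) \;=\; v_p(\Delta) + 6\, v_p(d) - v_p(\Delta').
\]
Substituting the formula for $v_p(\Delta')$ from Proposition~\ref{connell} in each of its cases, and noting $v_p(d) = 1$ when $p\mid d$ is odd, $v_2(d) = 1$ precisely when $d \equiv 2 \pmod 4$, and $v_p(d) = 0$ otherwise, yields the value of $v_p(u)$, which one checks matches $v_p(u_p)$ in every case. For example, case~1(b) gives $v_p(u) = \tfrac{1}{12}(v_p(\Delta) + 6 - (v_p(\Delta) - 6)) = 1 = v_p(p)$; case~2(b)i gives $v_2(u) = \tfrac{1}{12}(v_2(\Delta) + 0 - (v_2(\Delta) + 12)) = -1 = v_2(1/2)$; case~2(c)ii gives $v_2(u) = \tfrac{1}{12}(v_2(\Delta) + 6 - (v_2(\Delta) - 18)) = 2 = v_2(4)$. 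The remaining sub-cases are of the same mechanical nature. Since $u_p = 1$ and $v_p(\Delta') = v_p(\Delta)$ whenever a prime is not listed explicitly, $\tilde u = \prod_p u_p$ is a finite product, and $v_p(\tilde u) = v_p(u)$ for every $p$. As $u,\tilde u\in\Q^\times$, we conclude $u = \pm\tilde u$.

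Finally, $E^d_{\min}$ admits the standard involution $[-1, 0, -a_1', -a_3']$, where $a_1', a_3'$ are the coefficients of $E^d_{\min}$; this comes from the map $y' \mapsto -y' - a_1' x' - a_3'$, which always preserves a Weierstrass equation. Composing the transformation $[u, r, s, t]$ with this involution multiplies the leading coefficient by $-1$ and keeps the other three entries rational, so we may always choose a transformation from $(E^d)$ to $E^d_{\min}$ with leading coefficient exactly $\tilde u$. The only obstacle is the bookkeeping required for the per-case arithmetic check, which is mechanical but has many sub-cases; there is no conceptual difficulty once the valuation identity $12 v_p(u) = v_p(\Delta) + 6 v_p(d) - v_p(\Delta')$ is in hand.
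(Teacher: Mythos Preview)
Your argument is correct and follows essentially the same route as the paper's proof: both hinge on the relation $u^{12}\Delta' = \Delta(E^d) = d^6\Delta$, feed in the case-by-case values of $v_p(\Delta')$ from Proposition~\ref{connell}, and then fix the residual sign with a $[-1,\ldots]$ transformation. The only cosmetic difference is that the paper first applies $[\tilde u,0,0,0]$ to reach an intermediate (possibly non-integral) curve $E'$ with the correct minimal discriminant and then argues that a further $[1,r,s,t]$ carries $E'$ to $E^d_{\min}$, whereas you start directly from the existence of a transformation $(E^d)\to E^d_{\min}$ and pin down its $u$-coefficient; the underlying computation and logic are the same.
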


\begin{proof} \par The idea of the proof is to apply Proposition~\ref{connell} to the elliptic curve $E$ and then to find a transformation sending $E^d$ to a minimal model. 
\par We claim that $[\tilde{u},0,0,0]$ transforms $E^d$ to a curve with the correct minimal discriminant. This follows on a case by case basis using Proposition~\ref{connell}, Remark~\ref{twist}, and Remark~\ref{prop}. Take for example the case~$1(b)$: this is the case where, by Proposition~\ref{connell}, $v_p(\Delta(E^d_{\text{min}})) = v_p(\Delta(E)) - 6$. By Remark~\ref{twist}, we know that $v_p(\Delta(E^d))=v_p(d^6\Delta(E))=v_p(\Delta(E))+6$, since in this case, $p$ divides $d$ (and $d$ is square-free). Therefore $v_p(\Delta(E^d_{\text{min}})) = v_p(\Delta(E^d)) -12$. The transformation which will decrease the valuation of the discriminant by 12 is $[p,0,0,0]$  by Remark~\ref{prop}; hence proving the Proposition in this case. Applying a similar process to the other cases will derive the respective $u_p$. Since the $u_p$s are coprime to each other, composing the transformations $[u_p,0,0,0]$ will give the transformation $[\tilde{u},0,0,0]$. Thus the transformation, $[\tilde{u},0,0,0]$, will send $E^d$ to an elliptic curve $E'$ with the correct minimal discriminant, but which may not have integer coefficients.

\par We will now show that we can find $r,s,t \in \mathbb{R}$ so that the transformation $[\tilde{u},r,s,t]$ applied to $E^d$ also gives an integral model for $E^d$, and therefore a minimal model. Since $E^d_{\text{min}} \isom E'$, we know that there is a transformation $[u,r,s,t]$ that sends $E'$ to $E^d_{\text{min}}$ \cite[Cor. 7.8.3]{silverman:aec}. By comparing discriminants we see that $u=\pm 1$; we can assume $u=1$ since we can compose this morphism with $[-1,0,0,0]$ to change the sign of $u$. Composing the morphism $[\tilde{u},0,0,0]$ with $[1,r,s,t]$ gives the desired morphism, $[\tilde{u},r,s,t]$, sending $E^d$ to $E^d_{\text{min}}$.
\end{proof}
\par For the benefit of the reader we remark that often the transformation $[\tilde{u},0,0,0]$ will in fact transform $E^d$ to an equation with integral coefficients, hence a minimal model, but for our purposes only the $u$ coefficient of the transformation will play a role later. 

\begin{cor} \label{cor:tildeu} 
We use the notation of Proposition~\ref{minimal}. 
Suppose $d$ is coprime to~$\Delta$.
Then $\tilde{u}$ is a power of~$2$.
Moreover if $d \equiv 1 \mod 4$, then $\tilde{u}=1$.
\end{cor}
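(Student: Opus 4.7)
The plan is to examine the definition of $u_p$ case by case from Proposition~\ref{minimal} under the hypothesis that $d$ is coprime to~$\Delta$, and observe that only the prime $2$ can contribute a non-unit factor to $\tilde{u}$.

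First I would handle the odd primes. For any odd prime $p$ not dividing $d$, we have $u_p = 1$ directly from the definition. For an odd prime $p$ dividing $d$, the assumption that $d$ is coprime to $\Delta$ forces $v_p(\Delta) = 0$. Since
\[
\lambda_{v_p} = \min\{3v_p(c_4(E)), 2v_p(c_6(E)), v_p(\Delta)\} \leq v_p(\Delta) = 0 < 6,
\]
case $1(a)$ of Proposition~\ref{minimal} applies (independently of whether $p=3$ and the auxiliary condition on $v_p(c_6(E))$ holds), yielding $u_p = 1$. Hence $\tilde{u} = u_2$.

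Next I would note that regardless of the 2-adic signature of $E$ and the residue of $d$ modulo $4$, every value of $u_2$ listed in case 2 of Proposition~\ref{minimal} lies in $\{1/2,\,1,\,2,\,4\}$, each of which is a power of $2$. Therefore $\tilde{u}$ is a power of~$2$, proving the first assertion. For the second assertion, if $d \equiv 1 \pmod 4$ then case $2(a)$ applies and gives $u_2 = 1$ directly, so $\tilde{u} = 1$.

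There is no serious obstacle here: the argument is essentially a direct reading of the case distinctions in Proposition~\ref{minimal}, with the only substantive input being that coprimality of $d$ to $\Delta$ zeroes out $\lambda_{v_p}$ for every odd prime $p \mid d$, collapsing case $1$ into its trivial subcase.
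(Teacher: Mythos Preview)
Your proof is correct and follows essentially the same approach as the paper: both arguments show $u_p=1$ for every odd prime~$p$ by noting that $p\nmid d$ gives this directly, while $p\mid d$ with $(d,\Delta)=1$ forces $v_p(\Delta)=0$ and hence $\lambda_{v_p}<6$, landing in case~1(a); then both invoke case~2(a) for the second claim. Your write-up is slightly more explicit in listing the possible values of~$u_2$ to justify that $\tilde{u}=u_2$ is a power of~$2$, but the argument is the same.
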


\begin{proof} 
Let $p$ be an odd prime. If $p$ does not divide~$d$, then by 
Proposition~\ref{minimal}, $u_p = 1$.
If $p$ divides~$d$, then $v_p(\Delta)=0$ since $d$ is coprime to $\Delta$, and so
$\lambda_{v_p}<6$, and thus 
by Proposition~\ref{minimal}, $u_p = 1$. In both cases, $u_p =1$ for odd primes, which proves
the first claim of the corollary.
. 
If  $d \equiv 1 \mod 4 $, then by Case 2(a) Proposition~\ref{minimal} $u_2=1$. 
The second claim of the corollary follows, since
$\tilde{u}=\prod_p{u_p}=1$
\end{proof}

\begin{defi} We define $E^d_{\text{min}}$ to be the specific minimal model of elliptic curve $E^d$ obtained via Proposition~\ref{minimal}.
\end{defi}

\section{Periods} \label{sec:per}

We first prove a relation between the invariant differentials of $E$ and $E^d_{\text{min}}$ and then use this relation to prove the desired relation between the periods in our main result.

\begin{lemma} \label{omega} We have:
$$ \omega(E^d) = \frac{\omega(E)}{\sqrt{d}}$$ and
$$ \omega(E^d_{\text{min}}) = \frac{\tilde{u}\cdot \omega(E)}{ \sqrt{d}}.$$

\end{lemma}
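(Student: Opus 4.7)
The plan is to apply the change-of-variables formula for the invariant differential (item~(5) of Remark~\ref{prop}) twice: once to the explicit twist transformation from Remark~\ref{twist}, and once to the minimization transformation from Proposition~\ref{minimal}.

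First I would prove the first identity. By Remark~\ref{twist}, the change of variables sending $E$ to the (possibly non-minimal) twist equation $E^d$ is of the form $[u,r,s,t]$ with $u=\alpha=\sqrt{1/d}$ (the values of $r,s,t$ are irrelevant for the invariant differential). Applying item~(5) of Remark~\ref{prop} to this transformation yields
\[
\omega(E^d) \;=\; u\,\omega(E) \;=\; \frac{\omega(E)}{\sqrt{d}},
\]
which is the first claim.

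For the second identity, I would invoke Proposition~\ref{minimal}, which gives a transformation of the form $[\tilde{u},r,s,t]$ sending $E^d$ to its minimal model $E^d_{\text{min}}$. Applying item~(5) of Remark~\ref{prop} once more, together with the first identity already established, gives
\[
\omega(E^d_{\text{min}}) \;=\; \tilde{u}\,\omega(E^d) \;=\; \frac{\tilde{u}\cdot\omega(E)}{\sqrt{d}},
\]
which is the second claim.

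There is essentially no obstacle here: the statement is a direct corollary of the transformation formula for $\omega$ combined with the two transformations already identified in Remark~\ref{twist} and Proposition~\ref{minimal}. The only small subtlety is that item~(5) of Remark~\ref{prop} is stated for rational transformations, while the twist transformation has $u=1/\sqrt{d}$; but the formula $\omega(E')=u\,\omega(E)$ follows from the computation $dx = u^2\,dx'$ and $2y+a_1x+a_3 = u^3(2y'+a_1'x'+a_3')$, which is purely formal and valid over any field containing the coefficients, so it applies equally well when $u=1/\sqrt{d}\in\mathbb{Q}(\sqrt{d})$.
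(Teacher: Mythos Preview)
Your proof is correct and follows exactly the same approach as the paper: apply item~(5) of Remark~\ref{prop} first to the twist transformation of Remark~\ref{twist} (with $u=\sqrt{1/d}$), then to the minimization transformation of Proposition~\ref{minimal} (with $u=\tilde{u}$). Your additional remark justifying the use of the differential formula over $\mathbb{Q}(\sqrt{d})$ is a nice touch that the paper leaves implicit.
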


\begin{proof} Using the properties listed in Remarks~\ref{prop} and \ref{twist} regarding transformations, the transformation taking $E$ to $E^d$ has $u=\alpha=\sqrt{1/d}$. Then by Remark~\ref{prop},
$ \omega(E^d) = \frac{\omega(E)}{\sqrt{d}}$. By Proposition~\ref{minimal}, the transformation taking $E^d$ to $E^d_{\text{min}}$ has $u=\tilde{u}$. Then $ \omega(E^d_{\text{min}}) =  \tilde{u} \cdot \omega(E^d) = \frac{\tilde{u} \cdot \omega(E)}{ \sqrt{d}}$.
\end{proof}

\par We now prove the main result relating the periods.

\begin{thm} \label{period} Recall that $E$ is a minimal elliptic curve and $E^d$ is its quadratic twist by $d$. Then the periods of $E$ and $E^d$ are related as follows \\
If $d >0$, then
$$ \Omega(E^d)=  \frac{\tilde{u}}{\sqrt{d}} \Omega(E). $$
If $d<0$, then up to a sign,
$$ \Omega(E^d)=  \frac{\tilde{u}}{\sqrt{d}} c_\infty(E^d) \Omega^-(E) , $$
where $c_\infty(E^d)$ is the number of connected components of $E^d(\mathbb{R})$.
\end{thm}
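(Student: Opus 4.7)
The plan is to pull back the defining integral of $\Omega(E^d)$ via the isomorphism $\phi\colon E \to E^d_{\text{min}}$ defined over $\mathbb{Q}(\sqrt{d})$ (obtained by composing the isomorphism from Remark~\ref{twist} with the transformation from Proposition~\ref{minimal}), and then invoke Lemma~\ref{omega}, which says $\phi^*\omega(E^d_{\text{min}}) = \frac{\tilde u}{\sqrt d}\,\omega(E)$. The argument then splits according to the sign of~$d$.

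When $d>0$, the field $\mathbb{Q}(\sqrt d)$ lies in $\mathbb{R}$, so $\phi$ is defined over $\mathbb{R}$ and restricts to a diffeomorphism $E(\mathbb{R})\to E^d_{\text{min}}(\mathbb{R})$. The change of variables formula combined with Lemma~\ref{omega} immediately yields
$$\Omega(E^d)=\int_{E^d_{\text{min}}(\mathbb{R})}|\omega(E^d_{\text{min}})|=\int_{E(\mathbb{R})}\left|\tfrac{\tilde u}{\sqrt d}\omega(E)\right|=\tfrac{\tilde u}{\sqrt d}\,\Omega(E),$$
so this case is entirely mechanical.

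When $d<0$ the isomorphism $\phi$ is no longer defined over $\mathbb{R}$, so I would extend it to a biholomorphism $E(\mathbb{C})\to E^d_{\text{min}}(\mathbb{C})$ and analyze the preimage $L := \phi^{-1}(E^d_{\text{min}}(\mathbb{R}))$. Because the quadratic twist's Galois cocycle sends $\phi$ to $-\phi$ under complex conjugation~$c$, one checks that $\phi(P)$ is real precisely when $c(P)=-P$; hence $L$ is the \emph{anti-invariant} real locus $\{P\in E(\mathbb{C}):c(P)=-P\}$. Change of variables together with Lemma~\ref{omega} then yields
$$\Omega(E^d)=\int_L\left|\tfrac{\tilde u}{\sqrt d}\omega(E)\right|=\tfrac{\tilde u}{\sqrt{|d|}}\int_L|\omega(E)|,$$
so the task reduces to evaluating $\int_L|\omega(E)|$ and matching it to $\Omega^-(E)$.

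The main obstacle is precisely this last evaluation. I would use the complex uniformization $E(\mathbb{C})\cong\mathbb{C}/\Lambda$ with $\Lambda$ conjugation-stable (since $E$ is defined over $\mathbb{R}$) and $\omega(E)=dz$, so that complex conjugation becomes $z\mapsto\bar z$ and $L$ corresponds to $\{z\in\mathbb{C}/\Lambda:2\,\mathrm{Re}(z)\in\Lambda\}$. A case analysis on whether $\Lambda$ is rectangular (equivalently on the sign of $\Delta(E)$) shows that $L$ is a disjoint union of $c_\infty(E)$ parallel circles, each of arc length $|\Omega^-(E)|$, giving $\int_L|\omega(E)|=c_\infty(E)\,|\Omega^-(E)|$. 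Since $\Delta(E^d)=d^6\Delta(E)$ with $d^6>0$, the discriminants share a sign and therefore $c_\infty(E^d)=c_\infty(E)$. Finally, tracking the purely imaginary factors $\sqrt d$ and $\Omega^-(E)$ converts the absolute values above into the signed product $\tfrac{\tilde u}{\sqrt d}\,c_\infty(E^d)\,\Omega^-(E)$ appearing in the statement, up to the sign indicated there.
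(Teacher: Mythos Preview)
Your proof is correct. The case $d>0$ is identical to the paper's treatment. For $d<0$, both you and the paper hinge on the same key observation---that the twisting isomorphism intertwines complex conjugation on~$E^d$ with $P\mapsto -\overline P$ on~$E$, so that $E^d(\R)$ corresponds to the anti-invariant locus $E(\C)^-$---but the two arguments finish differently. The paper verifies this observation by an explicit coordinate computation (the Claim that $\sigma(T(P))=-T(P)$), then invokes two homology lemmas (Lemma~4.4 of~\cite{agst:bsd} and Lemma~\ref{amod:lem2} of the appendix) to show that a generator of $H_1(E^d(\C),\Z)^+$ is carried to a generator of $H_1(E(\C),\Z)^-$, and integrates over that cycle using Lemma~\ref{amod:lem1}. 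You instead pass to the uniformization $E(\C)\cong\C/\Lambda$ and read off the anti-invariant locus and its $|\omega|$-length directly from the lattice, splitting into the rectangular and non-rectangular cases. Your route is more hands-on and self-contained---it sidesteps the appendix lemmas entirely---while the paper's homological phrasing is cleaner and would generalize more readily beyond elliptic curves. One minor remark: since $L$ is by construction diffeomorphic to $E^d_{\text{min}}(\R)$, it already has $c_\infty(E^d)$ components, so your discriminant argument that $c_\infty(E)=c_\infty(E^d)$, while correct, is not strictly needed.
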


\begin{proof}
We first prove the formula for $d>0$:\\

As remarked in the proof of Lemma~\ref{omega}, the transformation that takes $E$ to $E^d$ takes $\omega(E)$ to $\sqrt{d} \omega(E^d)$. This transformation sends $E(\mathbb{R})$ bijectively to $E^d(\mathbb{R})$ because the transformation and its inverse are defined over $\mathbb{R}$ (since $d>0$). Then:

\begin{equation} \label{firstintegral} \int _{E(\mathbb{R})} {|\omega(E)|} = \sqrt{d} \int _{E^d(\mathbb{R})} { |\omega(E^d)|}. \end{equation}
Using a similar argument we see that:

\begin{equation} \label{secondintegral} \int _{E^d(\mathbb{R})} {|\omega(E^d)|}  = \frac{1}{\tilde{u}} \int _{E^d_{\text{min}}(\mathbb{R})} { |\omega(E^d_{\text{min}})|}. \end{equation}

\par Then from equation (\ref{firstintegral}) and equation (\ref{secondintegral}) we see that:

$$ \Omega(E) = \int _{E(\mathbb{R})} {|\omega(E)|} = \frac{\sqrt{d}}{\tilde{u}} \Omega(E^d).$$

Next we prove the formula for $d<0$: \\
We follow the technique used in the proof of \cite[Lemma~2.1]{vissq}. Let $P=(x,y) \in E(\mathbb{R})$ and let $\sigma$ be the complex conjugation map; then $\sigma(P)=P$. The inverse of the map described in Remark~\ref{twist} is given by: 
\begin{equation*} \left\{ \begin{split} &x'=\frac{1}{\alpha^2}x \\ & y'=\frac{1}{\alpha^3}y - \frac{a_1}{2}\left(\frac{1}{\alpha^2}-\frac{1}{\alpha^3} \right)x -\frac{a_3}{2}\left( 1-\frac{1}{\alpha^3} \right)  \end{split} \right. \end{equation*}
where $\alpha=\sqrt{1/d}$. Let $T$ be this map, $T:E^d\rightarrow E$. \\

\emph{Claim:}  $\sigma(T(P)) = -T(P)$. 
\begin{proof}
$$\sigma(T(P)) =  \sigma \left( \left(\frac{1}{\alpha^2}x,\frac{1}{\alpha^3}y - \frac{a_1}{2}\left(\frac{1}{\alpha^2}-\frac{1}{\alpha^3} \right)x -\frac{a_3}{2}\left( 1-\frac{1}{\alpha^3} \right) \right) \right) =$$ $$=\left(\frac{1}{\alpha^2}x,\frac{-1}{\alpha^3}y - \frac{a_1}{2}\left(\frac{1}{\alpha^2}+\frac{1}{\alpha^3} \right)x -\frac{a_3}{2}\left( 1+\frac{1}{\alpha^3} \right) \right).$$
 Using the definition of the negative of a point on an elliptic curve, given in \cite[III.2.3]{silverman:aec}:$$-T(P)=- \left(\frac{1}{\alpha^2}x,\frac{1}{\alpha^3}y - \frac{a_1}{2}\left(\frac{1}{\alpha^2}-\frac{1}{\alpha^3} \right)x -\frac{a_3}{2}\left( 1-\frac{1}{\alpha^3}  \right) \right)=$$ $$=\left(\frac{1}{\alpha^2}x,-\left( \frac{1}{\alpha^3}y - \frac{a_1}{2}\left(\frac{1}{\alpha^2}-\frac{1}{\alpha^3} \right)x -\frac{a_3}{2}\left( 1-\frac{1}{\alpha^3} \right) \right) -a_1\left(\frac{1}{\alpha^2}x\right)-a_3 \right) $$ $$=\left(\frac{1}{\alpha^2}x,\frac{-1}{\alpha^3}y - \frac{a_1}{2}\left(\frac{1}{\alpha^2}+\frac{1}{\alpha^3} \right)x -\frac{a_3}{2}\left( 1+\frac{1}{\alpha^3} \right) \right).$$
Then we see that $\sigma(T(P)) = -T(P)$. 
\end{proof}
Thus $T$ gives a homeomorphism from $E^d(\R)$ to $E(\C)^-$, where $E(\C)^-$ is the subgroup of points not fixed under complex conjugation. If $G$ is a Lie group, then let $G_0$ denote the connected component of~$G$
containing the identity. Then $T$ also induces a homeomorphism from $E^d(\R)_0$ to $E(\C)^-_0$. 

In particular, $T$ gives an isomorphism from $H_1(E^d(\R)_0,\Z)$ to $H_1(E(\C)^-_0,\Z)$. 
By Lemma 4.4 in \cite{agst:bsd}, the natural map 
from $H_1(E^d(\R)_0,\Z)$ to~$H_1(E^d(\C),\Z)^+$ is an isomorphism, and
by Lemma~\ref{amod:lem2} from the appendix (Section~\ref{sec:app}), the natural map 
from $H_1(E^d(\C)^-_0,\Z)$ to~$H_1(E^d(\C),\Z)^-$
is an isomorphism.
Let $\gamma$ be a generator of~$H_1(E^d(\C), \mathbb{Z})^+$. Then from the statements above, one sees that 
$T(\gamma)$ is in $H_1(E(\C),\mathbb{Z})^-$ and generates it.

Then it follows that  
\begin{equation} \label{thirdintegral} \int _{\gamma} {\omega(E^d)}=\int _{T(\gamma)} {T(\omega(E^d))}
= \frac{1}{\sqrt{d}} \int _{T(\gamma)} {\omega(E)} = 
\frac{1}{\sqrt{d}} \Omega^-(E), \end{equation}
where the last equality is up to a sign.

\par Similar to equation (\ref{secondintegral}) we have, 
\begin{equation} \label{fourthintegral} \int _{E^d(\mathbb{R})} {\omega(E^d)}  = \frac{1}{\tilde{u}} \int _{E^d_{\text{min}}(\mathbb{R})} { \omega(E^d_{\text{min}})}, \end{equation}
since the transformation in this integral involves only real numbers it takes $E^d(\mathbb{R})$ to $E^d_{\text{min}}(\mathbb{R})$.
\par Using equation~(\ref{eqn:app}) from the appendix 
and equation (\ref{fourthintegral}), we see
that up to a sign,
\begin{eqnarray} \label{eqn:first}
\Omega(E^d) = \int _{E^d_{\text{min}}(\mathbb{R})} { \omega(E^d_{\text{min}})} = 
\tilde{u} \int _{E^d(\mathbb{R})} {\omega(E^d)} .
\end{eqnarray}
The proof of Lemma~\ref{amod:lem1} from the appendix shows that
\begin{eqnarray} \label{eqn:second}
\int _{E^d(\mathbb{R})} {\omega(E^d)} =
c_\infty(E^d) \int _{\gamma} {\omega(E^d)}.
\end{eqnarray}
Putting equation~(\ref{eqn:second}) in equation~(\ref{eqn:first}), we see that up to a sign:
$$
\Omega(E^d) =  \tilde{u} \cdot 
c_\infty(E^d) \int _{\gamma} {\omega(E^d)} = \frac{\tilde{u}}{\sqrt{d}} \cdot c_\infty(E^d) \cdot \Omega^-(E),
$$
where the last equality follows from equation (\ref{thirdintegral}).
This finishes the proof for the case $d <0$ and proves the theorem.
\end{proof}

\section{Examples}
\subsection{Real quadratic twist}
Using Sage and GP/Pari we were able to find the following example
in which the $\tilde{u}$ in Theorem~\ref{period} is $5$.

Let $E$ be the following elliptic curve $$E: y^2 = x^3 - x^2 - 6883x + 222137,$$
which is minimal.

By Proposition~\ref{minimal}, twisting $E$ by $d=5$ falls in cases 1(b) and 2(a), and so $\tilde{u}=5$. Then by Theorem~\ref{period}, $\Omega(E^d) / \Omega(E) =\frac{5}{\sqrt{5}}=\sqrt{5}$. We now try to verify this in GP/Pari. 

Using Lemma~\ref{twistformula}, we compute the twist by $d=5$ to be $$E^d: y^2 = x^3 - 5x^2 - 172075x + 27767125.$$ 

Using the command ellminimalmodel in GP/Pari we see that one of the minimal models for $E^d$ is then  $$ y^2 = x^3 + x^2 - 275x + 1667 .$$

For an elliptic curve $E$ we can compute the periods in GP/Pari using the command E.omega[1].

\begin{rmk} \label{realperiod} The period computed this way is similar to the period we use, but instead of using a minimal model it is defined as $\int _\gamma {\omega(E)}$, where $\gamma$ is a generator of~$H_1(E(\C), \mathbb{Z})^+$. Therefore we have to first compute a minimal model for~$E$, use that to compute the period in GP/Pari, and then multiply the result by $c_\infty(E)$, the number of connected components, to get the period we desire. 
\end{rmk}

We can see that both $E$ and $E^d$ have only one connected component, by either plotting them or noticing that they both have negative discriminants, thus $c_\infty(E)=c_\infty(E^d)=1$.

 Then one finds that $$ \Omega(E^d) = \Omega(E^d_{\text{min}}) \approx 2.90253993995\dots$$
$$ \Omega(E) \approx 1.29805532262\dots$$

So $\Omega(E^d)/\Omega(E) \approx \sqrt{5}$, 
as expected.

\subsection{How the complex period of GP/Pari relates to the imaginary period defined above.}

Recall that the imaginary period is defined up to a sign as $$\Omega^-(E) = \int _{\gamma} {\omega(E)},$$ where $\gamma$ is a generator of $H_1(E_{\text{min}},\mathbb{Z})^-$. It will be a pure imaginary number since, if $\sigma$ denotes complex conjugation 
$$\sigma(\Omega^-(E)) = \int _{\sigma(\gamma)} {\sigma(\omega(E))}=\int _{-\gamma} {\omega(E)}=-\Omega^-(E).$$ The second equality holds since $\omega(E)$ is defined over $\mathbb{R}$ (in fact over $\mathbb{Q}$) and because $\sigma(\gamma) = -\gamma$.

\par The complex period computed by GP/Pari (using the command E.omega) is in general not a pure imaginary number. Using the periods given by GP/Pari we can however approximately recover the imaginary period. This is because the two periods computed by GP/Pari (called the real and complex periods) are generators for a lattice, which is also generated by the two periods used in this paper (called the period and the imaginary period). For an elliptic curve $E$, let $\Omega_1$ and $\Omega_1^-$ be the period and imaginary period, respectively, defined in this paper. Let $\Omega_2$ and $\Omega_2^-$ be the real and complex periods, respectively, that are computed in GP/Pari for $E$ using the function E.omega. Since the pairs are generators for the same lattice we have, $\Omega_1^- = k_1\Omega_2^- -k_2 \Omega_2$ for some $k_1,k_2 \in \mathbb{Z}$. We also know that $\Omega_1^-$ is a pure imaginary number and that $\Omega_2$ is a real number, therefore  $k_2/k_1=Re(\Omega_2^-)/\Omega_2$ where $Re(z)$ denotes the real part of the complex number $z$. Then $k_1$ and $k_2$ can be chosen such that $\gcd(k_1, k_2) = 1$. Finding such a $k_1$ and $k_2$ gives a way to compute the imaginary period using GP/Pari; however, we can only compute $Re(\Omega_2^-)/\Omega_2$ approximately and hence we can only make a good guess of what $k_1$ and~$k_2$ are.

\subsection{Imaginary quadratic twist}
Using Sage and GP/Pari we were able to find the following example
in which the $\tilde{u}$ in Theorem~\ref{period} is $7$.

Let $E$ be the following elliptic curve $$E:  y^2 + xy + y = x^3 - 173x + 879,$$
which is minimal.

By Proposition~\ref{minimal},  twisting $E$ by $d=-7$ falls in cases 1(b) and 2(a); hence $\tilde{u}=7$. Then by Theorem~\ref{period}, $\Omega(E^d) / \Omega^-(E) = \frac{7}{\sqrt{-7}}=\sqrt{-7}$, up to a sign. We now try to verify this in GP/Pari. 

Using Lemma~\ref{twistformula} we compute the twist by $d=-7$ to be $$E^d: y^2 + xy + y = x^3 - 2x^2 - 8453x - 301583.$$

Using the command ellminimalmodel in GP/Pari we see that one of the minimal models for $E^d$ is then  $$  y^2 + xy = x^3 + x^2 - 3x - 4  .$$

We can see that both $E$ and $E^d$ have only one connected component, by either plotting them or noticing that they both have negative discriminants, thus $c_\infty(E)=c_\infty(E^d)=1$.

Using Remark~\ref{realperiod} one finds that $$\Omega(E^d) = \Omega(E^d_{\text{min}}) \approx  1.73968697697\dots$$
Following the procedure to compute the imaginary period from Section~4.2 we find that $k_2/k_1 \approx -.50000000000\dots$. Assuming that this is actually $-1/2$, we get
$$ \Omega^-(E) \approx (.65753987145\dots) \sqrt{-1}$$  
and  
$\Omega(E^d)/\Omega^-(E) \approx \sqrt{-7}$, 
as expected.

\section{Appendix on periods by Amod Agashe}  \label{sec:app}
In Section~\ref{sec:facts}, we
state and prove some facts about periods that are well
known, but whose proofs do not seem to be documented in the literature;
some of these results are used in Section~\ref{sec:per}.
In Section~\ref{sec:lemma}, we 
give a lemma that is used in Section~\ref{sec:per}. 
In Section~\ref{sec:impl}, we
point out the implications of
the results of this article to~\cite{vissq}, and in particular,
we make a conjecture that strengthens a conjecture made in~\cite{vissq}.

\subsection{Some facts about periods} \label{sec:facts}

Let $E$ be an elliptic curve over~$\Q$, and let $\Emin$
denote an elliptic curve given by a global minimal Weierstrass equation for~$E$.
Let $\omega(\Emin)$ denote the invariant differential on~$\Emin$. Then recall that the
period of~$E$ is defined as 
$$\Omega(E) = \int_{\Emin(\R)} | \omega(\Emin) |.$$
Note that if we take a different global minimal Weierstrass equation for~$E$,
call it $\Epmin$, then $\Emin$ and~$\Epmin$ are isomorphic to each other over~$\Q$
by a transformation of the type $[u,r,s,t]$ 
(notation as in Remark~\ref{prop}) with $u = \pm 1$ (since
they have the same discriminant, and the transformation changes the discriminant
by a factor of~$u^{12}$, by Remark~\ref{prop}). Then the invariant differential
of~$\Epmin$ differs from that of~$\Emin$ by a factor of~$u$ (again, see
Remark~\ref{prop}), i.e., by~$\pm 1$, and so the definition of~$\Omega(E)$
given above is independent of the choice 
of a global minimal Weierstrass equation for~$E$.
If two elliptic curves are
isomorphic over~$\Q$, then they have a common minimal model, and hence
they have the same period.

The N\'eron model of~$E$ is the open subscheme of~$\Emin$
consisting of the regular points (see~\S~III.6 of~\cite{lang:nt3}), and
so the period defined above agrees with the period used in the 
more general version of the Birch and
Swinnerton-Dyer conjecture for abelian varieties (as described for example 
in \S~III.5 of loc. cit.), which uses N\'eron differentials.

Now as a Lie group, $\Emin(\R)$ is isomorphic to one or two copies of~$\R/\Z$
(see, e.g., \cite[Cor. V.2.3.1]{silverman:aec2}).
Since the invariant differential has no zeros or poles 
(see Prop.~III.1.5 in~\cite{silverman:aec}), it does not change its sign on 
any copy of~$\R/\Z$, and so on any copy, we have $|\omega(\Emin)| = \pm \omega(\Emin)$. If $\Emin(\R)$ consists of one copy, then
we see that up to a sign, $\Omega(E) =  \int_{\Emin(\R)} \omega(\Emin)$.
Now suppose $\Emin(\R)$ consists of two copies; call them $C_1$ and~$C_2$. 
Without loss of generality, assume that $C_1$ contains the identity, and
choose a point~$P$ on~$C_2$. Then the translation by~$P$ map induces
a map from~$C_1$ to~$C_2$
(by continuity arguments) and similarly, translation by~$-P$ maps $C_2$ to~$C_1$. These
two maps are inverses to each other, and moreover, $\omega(\Emin)$ is invariant
under translation. Thus we see that the integral of $|\omega(\Emin)|$ over~$C_1$
is the same as that over~$C_2$ and up to a sign is the integral of $\omega(\Emin)$ 
over either component. Thus in this case, 
up to a sign, $\Omega(E) =  2 \cdot \int_{C_1} \omega(\Emin)$.
In either case, we see that up to a sign,
\begin{equation} \label{eqn:app}
\Omega(E) =  \int_{\Emin(\R)}  \omega(\Emin). 
\end{equation}

If $\phi: E \ra \Emin$ is an isomorphism (such an isomorphism exists,
of course), then $\phi$ maps $E(\R)$ bijectively to~$\Emin(\R)$ and
one sees (by integration by substitution) that
$$\int_{\Emin(\R)} \omega(\Emin) = \int_{E(\R)} \phi^*\omega(\Emin),$$
where $\omega(E)$ as usual is the invariant differential on~$E$
and $\phi^*$ denotes the pullback by~$\phi$ map on differentials.
Thus up to a sign,
$$\Omega(E) = \int_{E(\R)}  \phi^* \omega(\Emin).$$
This definition was used in~\cite{agst:manin}, for example.

Considering that $C_1$ is homeomorphic to the circle, and
the natural map 
from the first homology group of~$C_1$ to~$H_1(\Emin(\C),\Z)^+$ is an isomorphism
(e.g., see Lemma 4.4 in~\cite{agst:bsd}),
from the discussion two paragraphs above, we 
get the following lemma:

\begin{lemma}  \label{amod:lem1}
Let $\gamma$ be a generator of the cyclic free abelian group~$H_1(\Emin(\C), \Z)^+$
and let $c_\infty(\Emin)$ denote the number of connected components in~$\Emin(\R)$.
Then up to a sign,
$$\Omega(E) = c_\infty(\Emin) \int_\gamma \omega(\Emin). $$
\end{lemma}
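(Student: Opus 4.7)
The plan is to simply assemble the ingredients already laid out in the three paragraphs immediately preceding the lemma statement, where nearly all of the work has been done. My approach will be to split into the two cases for the number of real components of $\Emin$ and in each case trace through the identifications.

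First I would use the isomorphism, already cited from \cite[Lemma 4.4]{agst:bsd}, between $H_1(C_1,\Z)$ and $H_1(\Emin(\C),\Z)^+$, where $C_1$ is the connected component of $\Emin(\R)$ containing the identity. Under this isomorphism, a generator $\gamma$ of the target corresponds, up to a sign, to the class of the circle $C_1 \simeq \R/\Z$ traversed once. Since $\omega(\Emin)$ is a closed (in fact, holomorphic) $1$-form, the integral $\int_\gamma \omega(\Emin)$ depends only on the homology class, so up to a sign
$$\int_\gamma \omega(\Emin) \;=\; \int_{C_1} \omega(\Emin).$$

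Now I would split on $c_\infty(\Emin)$. If $c_\infty(\Emin)=1$, then $\Emin(\R)=C_1$, and equation~\eqref{eqn:app} gives, up to a sign, $\Omega(E)=\int_{\Emin(\R)}\omega(\Emin)=\int_{C_1}\omega(\Emin)=\int_\gamma\omega(\Emin)$, which matches the claim since $c_\infty(\Emin)=1$. If instead $c_\infty(\Emin)=2$, then by the translation-by-$P$ argument already given before the lemma, the integral of $\omega(\Emin)$ over the non-identity component $C_2$ equals, up to a sign, the integral over $C_1$; combining with \eqref{eqn:app} yields $\Omega(E) = 2\int_{C_1}\omega(\Emin) = 2\int_\gamma\omega(\Emin)$, again matching the claim since $c_\infty(\Emin)=2$.

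Since nothing here requires any new ideas beyond collecting statements already proved in the preceding paragraphs, there is essentially no obstacle; the only subtlety is bookkeeping of signs, which is why the statement is only claimed up to a sign. I would therefore keep the written proof very short, explicitly citing \eqref{eqn:app} and the cited isomorphism, and presenting the two-case verification in a single displayed line each.
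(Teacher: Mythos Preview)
Your proposal is correct and follows essentially the same approach as the paper: the paper derives the lemma directly from the discussion establishing equation~\eqref{eqn:app} together with the cited isomorphism $H_1(C_1,\Z)\cong H_1(\Emin(\C),\Z)^+$ from \cite[Lemma~4.4]{agst:bsd}, exactly as you do. Your two-case breakdown and sign bookkeeping match the paper's reasoning precisely.
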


Note that since $E$ and~$\Emin$ are isomorphic over~$\Q$, and hence over~$\R$, 
we have $c_\infty(\Emin) = c_\infty(E)$, and so we also have that up to a sign,
$$\Omega(E) = c_\infty(E) \int_\gamma \omega(\Emin). $$
Lemma~\ref{amod:lem1} above is well known, and in fact a more general result for abelian
varieties is given as Lemma 8.8 in~\cite{manin:cyclo}. However, in loc. cit., the author
only gives a sketch of the proof of the quoted lemma, and uses the result of
Lemma~\ref{amod:lem1} above as an input without proof.

\subsection{A lemma} \label{sec:lemma}

In this section, let $E$ be an elliptic curve over~$\R$. 
Recall that $E(\C)^-$ denotes the subgroup of~$E(\C)$ on which complex conjugation
acts as multiplication by~$-1$ and 
$E(\C)^-_0$ is the component of~$E(\C)^-$ containing the identity.
The following lemma is an adaptation of Lemma 4.4 in~\cite{agst:bsd},
and is used in Section~3.
\begin{lemma} \label{amod:lem2}
The natural map from $H_1(E(\C)^-_0,\Z)$ to~$H_1(E(\C),\Z)^-$
is an isomorphism.
\end{lemma}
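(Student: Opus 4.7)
The plan is to pass to the analytic uniformization $E(\C) \cong \C/\Lambda$ and translate everything into statements about the lattice~$\Lambda$. Since $E$ is defined over~$\R$, the lattice $\Lambda$ is stable under the complex conjugation $\sigma: z \mapsto \bar z$, and the standard isomorphism $H_1(E(\C), \Z) \cong \Lambda$ is $\sigma$-equivariant. Reading off the $-1$-eigenspace of $\sigma$ acting on $\Lambda$ gives $H_1(E(\C), \Z)^- = \{v \in \Lambda : v + \bar v = 0\} = \Lambda \cap i\R$, which will be the right-hand side of the desired isomorphism.

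The next step is to compute $E(\C)^-_0$ explicitly in terms of $\Lambda$. A point $z + \Lambda$ lies in $E(\C)^-$ precisely when $z + \bar z \in \Lambda$, i.e.\ when $2\,\mathrm{Re}(z) \in \Lambda \cap \R$. Since $\Lambda \cap \R$ is discrete in~$\R$, the condition $2\,\mathrm{Re}(z) \in \Lambda \cap \R$ is locally constant in~$z$, so the preimage of $E(\C)^-$ in $\C$ is a disjoint union of vertical lines indexed by $\tfrac12(\Lambda \cap \R)$. The connected component through~$0$ is then the imaginary axis $i\R$, and hence $E(\C)^-_0 = i\R/(\Lambda \cap i\R)$. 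To see that this is genuinely a circle (and not a point), I would invoke the fact that $\sigma$ is an orientation-reversing involution of the oriented surface $E(\C)$, so its action on $H_1(E(\C),\Z) \cong \Z^2$ has determinant $-1$ and therefore rank-one $\pm 1$ eigenspaces; this forces $\Lambda \cap i\R$ to be a rank-one discrete subgroup of $i\R$.

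With these ingredients in hand, the lemma reduces to the observation that under the two identifications $H_1(E(\C)^-_0, \Z) \cong \Lambda \cap i\R$ (first homology of the circle $i\R/(\Lambda \cap i\R)$) and $H_1(E(\C), \Z)^- \cong \Lambda \cap i\R$ (the $\sigma$-antifixed part of $\Lambda$), the map induced on homology by the inclusion $E(\C)^-_0 \hookrightarrow E(\C)$ is simply the identity on $\Lambda \cap i\R$. The main technical step I expect to need care with is the identification of the identity component in the second paragraph; everything else amounts to standard bookkeeping with the period lattice of a complex torus, and in fact parallels Lemma~4.4 of~\cite{agst:bsd} with the roles of the real and imaginary axes interchanged.
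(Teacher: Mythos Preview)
Your argument is correct and complete in all essential respects. The one phrase I would tighten is ``locally constant'': what you actually need, and what your subsequent sentences use, is that the preimage of $E(\C)^-$ in~$\C$ is the disjoint union of vertical lines $\{\mathrm{Re}(z)=r : 2r\in\Lambda\cap\R\}$, so that a path in $E(\C)^-_0$ starting at~$0$ lifts to a path confined to the imaginary axis. With that adjustment the identification $E(\C)^-_0 = i\R/(\Lambda\cap i\R)$ is fully justified, and the rest is straightforward.

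Your route is genuinely different from the paper's. The paper argues abstractly: it writes down the commuting square of exponential sequences
\[
\begin{array}{ccccccc}
0 \to & H_1(E(\C)^-_0,\Z) & \to & H_1(E(\C)^-_0,\R) & \to & E(\C)^-_0 & \to 0\\
      & \downarrow        &     & \downarrow        &     & \downarrow & \\
0 \to & H_1(E(\C),\Z)^-   & \to & H_1(E(\C),\R)^-   & \to & E(\C)^-   &
\end{array}
\]
shows the middle vertical arrow is an isomorphism by a countability trick, and finishes with the snake lemma. No explicit lattice ever appears. Your approach instead names the uniformization $\C/\Lambda$, computes both sides as $\Lambda\cap i\R$, and checks the natural map is the identity. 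What you gain is transparency: one sees exactly which cycle generates each group and why they match. What the paper's argument gains is that it never needs to know that $\sigma$ acts as $z\mapsto\bar z$ on the covering space, nor the classification of involutions on~$\Z^2$; it would transplant verbatim to higher-dimensional abelian varieties, whereas your version would require tracking a basis of the $-1$-eigenspace of~$\sigma$ on the period lattice.
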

\begin{proof}
Let $\psi$ denote the natural map from $H_1(E(\C)^-_0,\Z)$ to~$H_1(E(\C),\Z)^-$.
We have the commutative diagram
$$\xymatrix@=1.3pc{
0 \ar[r] & H_1(E(\C)^-_0,\Z) \ar[r] \ar[d]^{\psi} & H_1(E(\C)^-_0,\R) \ar[r] \ar[d] & E(\C)^-_0 \ar[r] \ar@{^(->}[d]&  0 \\
0 \ar[r] & H_1(E(\C),\Z)^- \ar[r]                    & H_1(E(\C),\R)^- \ar[r] & E(\C)^- 
} \ ,$$
where the two vertical arrows on the right are the obvious natural maps,
the upper horizontal sequence the exact sequence obtained by viewing the real torus~$E(\C)^-_0$ 
as the quotient of the tangent space at the identity by the first integral homology,
and the lower horizontal sequence is the exact sequence 
obtained from the exact sequence
$$
 0\ra H_1(E(\C),\Z) \ra H_1(E(\C),\R) \ra E(\C) \ra 0
$$ 
of complex analytic parametrization of~$E$
by taking anti-invariants under complex conjugation.
The middle vertical map is an isomorphism of real vector spaces because
if it were not, then its kernel would be an uncountable
set that maps to~$0$ in $E(\C)^-_0$ (using the rightmost square in the
commutative diagram above), and hence would be contained
in~$H_1(E(\C)^-_0,\Z)$, which is countable.
The snake lemma then yields an exact sequence
$$ 
   0 \to \ker(\psi) \to 0 \to 0 \to \coker(\psi) \to 0,
$$
which implies that~$\psi$ is an isomorphism, as was to be shown.
\end{proof}

\subsection{Some implications} \label{sec:impl}

In this section, we point out the implications of
the results of this article to~\cite{vissq}.

By Corollary~\ref{cor:tildeu}, if 
$d$ is coprime to the conductor~$E$
(or the discriminant of~$E$), then the~$\tilde{u}$ in Theorem~\ref{period}
is a power of~$2$. Note that the $D$ in~\cite{vissq} is~$-d$, with $d < 0$.
Thus if one replaces the hypothesis~(**) in \S 2 of loc. cit., with the hypothesis
that $D$ is coprime to the conductor~$N$ of~$E$, then the conclusions of
Lemma~2.1, Proposition~2.2, and Corollary~2.4 are valid up to a power of~$2$. 
As a consequence (see the discussion after Corollary~2.4 in loc. cit.), we would
like to weaken the hypothesis~(**) in Conjecture~2.5 of loc. cit. to the hypothesis
that $D$ is coprime to~$N$, and thus make the following
conjecture:

\begin{conj}
Let $E$ be an optimal elliptic curve over~$\Q$ of conductor~$N$ and
let $-D$ be a negative fundamental discriminant such that $D$ is coprime to~$N$.
Recall that $\ED$ denotes the twist of~$E$ by~$-D$. Suppose 
$L(\ED,1) \neq 0$, so that $\ED(\Q)$ is finite.
Then 
$\Mid \ED(\Q) \miD^2$ divides
${\Mid \Sha(\ED) \miD \cdot   \prod_{p | N} c_p(\ED)}$,
up to a power of~$2$,
where $\Sha(\ED)$ denotes 
the Shafarevich-Tate group of~$\ED$ and $c_p(\ED)$ denotes the 
order of the arithmetic component group of~$\ED$ at~$p$.
\end{conj}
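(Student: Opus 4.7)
The plan is to derive this conjecture heuristically from the Birch and Swinnerton-Dyer conjecture for~$\ED$ and then isolate the one genuine arithmetic input that remains, so that the statement is reduced to that input and to bookkeeping up to a power of~$2$. The outline has three steps: write down the BSD formula, use Theorem~\ref{period} and Corollary~\ref{cor:tildeu} to control~$\Omega(\ED)$, and reduce the conjecture to an integrality statement of Gross/Waldspurger type for a twisted central $L$-value.

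First, since $L(\ED,1)\neq 0$ the rank of $\ED(\Q)$ is zero and the regulator is~$1$, so conjecturally
$$ \frac{L(\ED,1)}{\Omega(\ED)} \;=\; \frac{|\Sha(\ED)|\cdot\prod_{p\mid N(\ED)} c_p(\ED)}{|\ED(\Q)|^2}. $$
Because $D$ is coprime to~$N$, the primes of bad reduction of~$\ED$ that do not divide~$N$ are exactly the odd primes dividing~$D$; at any such prime $\ED$ has additive reduction and the component group has order a power of~$2$ (in fact a divisor of~$4$). Thus, up to a power of~$2$, the Tamagawa product over all bad primes of~$\ED$ agrees with the product over $p\mid N$ appearing in the conjecture, and we may freely replace the former by the latter.

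Second, I would eliminate $\Omega(\ED)$ using the main result of this paper. Since $D$ is coprime to~$N$ it is coprime to the discriminant of a global minimal model of~$E$, so by Corollary~\ref{cor:tildeu} the quantity $\tilde u$ is a power of~$2$, and Theorem~\ref{period} applied with $d=-D$ gives
$$ \Omega(\ED) \;=\; \frac{\tilde u}{\sqrt{-D}}\, c_\infty(\ED)\, \Omega^-(E) $$
up to sign. Since $c_\infty(\ED)\in\{1,2\}$, this reads, modulo a power of~$2$, as $\Omega(\ED) \equiv \Omega^-(E)/\sqrt{-D}$. Substituting into the BSD formula and clearing denominators reduces the conjectured divisibility of $|\ED(\Q)|^2$ to the claim that, up to a power of~$2$, the ratio $L(\ED,1)\sqrt{-D}/\Omega^-(E)$ is a nonzero integer.

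This last integrality is the main obstacle and is the only genuinely arithmetic input in the plan. It is precisely what one expects from a Gross/Waldspurger-type formula for $E$ optimal and $-D$ a fundamental discriminant coprime to~$N$: the $L$-value should be expressible as the square of a sum of modular symbols on~$X_0(N)$ attached to a Heegner-type datum of discriminant~$-D$, and integrality then follows from the Manin-constant control available for optimal elliptic curves together with the fact that modular symbols lie in $\Z\cdot\Omega^-(E)$. In~\cite{vissq} this integrality was extracted (in stronger form) under the hypothesis~(**); the role of Theorem~\ref{period} and Corollary~\ref{cor:tildeu} in the present plan is precisely to absorb the discrepancy one incurs upon replacing~(**) with coprimality of~$D$ and~$N$ into a power of~$2$, so that the final step is to invoke the argument of~\cite{vissq} in this weakened setting.
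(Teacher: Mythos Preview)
The statement in question is labeled a \emph{Conjecture} in the paper, and the paper does not prove it. What the paper offers is motivation: it observes that Theorem~\ref{period} together with Corollary~\ref{cor:tildeu} show that when $D$ is coprime to~$N$, the conclusions of Lemma~2.1, Proposition~2.2, and Corollary~2.4 of~\cite{vissq} remain valid up to a power of~$2$; this justifies weakening hypothesis~(**) in Conjecture~2.5 of~\cite{vissq} to mere coprimality. Beyond that, the paper reports only numerical verification for $ND^2<130000$.

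Your heuristic outline is exactly this chain of reasoning, unpacked: write BSD for~$\ED$, use Theorem~\ref{period} and Corollary~\ref{cor:tildeu} to reduce $\Omega(\ED)$ to $\Omega^-(E)/\sqrt{D}$ up to a power of~$2$, absorb the Tamagawa factors at primes dividing~$D$ (which are powers of~$2$, since a quadratic twist of a curve of good reduction at an odd prime has Kodaira type~$I_0^*$), and then feed into the integrality statement underlying Corollary~2.4 of~\cite{vissq}. You correctly identify that the genuine arithmetic input---the integrality of $L(\ED,1)\sqrt{D}/\Omega^-(E)$ up to a power of~$2$---is what~\cite{vissq} supplies, and that the role of the present paper is solely to show that dropping~(**) in favor of coprimality costs only a power of~$2$.

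So your write-up is not a proof, and neither is the paper's treatment; but your heuristic matches the paper's motivation essentially verbatim. Just be explicit that the conclusion remains conjectural because the BSD input is conjectural, and that the paper itself makes no stronger claim.
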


As mentioned in loc. cit., 
using the mathematical software sage, with its inbuilt Cremona's database 
for all elliptic curves of conductor up to~$130000$,
we verified the conjecture above for all triples $(N, E, D)$ such 
that $N$ and~$D$ are positive integers with $N D^2 < 130000$, 
and $E$ is an optimal elliptic curve
of conductor~$N$. 

Finally, we remark that Proposition~\ref{minimal} explains why 
the concluding statement of Conjecture~2.5 
of~\cite{vissq} does not hold in the example of $(E, D) = (27a1, 3)$ 
in Table~1 of loc. cit.
(this example does not satisfy the hypotheses of the conjecture):
using SAGE, we find that $\Delta(\Emin) = - 3^9$ and $c_6(\Emin) = 2^3 \cdot 3^6$,
and so by part~1(b) of Proposition~\ref{minimal}, $v_3(\tilde{u}) > 0$. 
In particular, this is an example of an {\em optimal} elliptic curve
for which $\tilde{u}$ is not a power of~$2$.
Anyhow, the concluding statement of Corollary~2.4 in loc. cit. does not hold,
and so for this pair $(E,D)$, assuming the second part of the Birch and Swinnerton-Dyer conjecture,
one does not expect that
$\Mid {E_{\scriptscriptstyle{-D}}}(\Q) \miD^2$ divides
${\Mid \Sha({E_{\scriptscriptstyle{-D}}}) \miD \cdot   \prod_{p | N} c_p({E_{\scriptscriptstyle{-D}}})}$,
even up to a power of~$2$ (see the discussion just before Corollary~2.5 in loc. cit.);
rather one expects that  
$\Mid {E_{\scriptscriptstyle{-D}}}(\Q) \miD^2$ divides
$\tilde{u} \cdot {\Mid \Sha({E_{\scriptscriptstyle{-D}}}) \miD \cdot   \prod_{p | N} c_p({E_{\scriptscriptstyle{-D}}})}$,
and so it is not surprising that 
$\Mid {E_{\scriptscriptstyle{-D}}}(\Q) \miD^2$ divides
$3 \cdot {\Mid \Sha({E_{\scriptscriptstyle{-D}}}) \miD \cdot   \prod_{p | N} c_p({E_{\scriptscriptstyle{-D}}})}$, up to a power of~$2$.

\bibliography{biblio}

\end{document}